\author{Shai Sarussi}
 \title{Extensions of integral domains and quasi-valuations}
\date{}
\begin{document}

\newtheorem{thm}{Theorem}[section]
\newtheorem{cor}[thm]{Corollary}
\newtheorem{lem}[thm]{Lemma}
\newtheorem{prop}[thm]{Proposition}
\newtheorem{ax}{Axiom}

\theoremstyle{definition}
\newtheorem{defn}[thm]{Definition}

\theoremstyle{remark}
\newtheorem{rem}[thm]{Remark}
\newtheorem{ex}[thm]{Example}
\newtheorem*{notation}{Notation}

\newcommand{\qv}{{quasi-valuation\ }}


\maketitle

\begin{abstract} {Let $S$ be an integral domain with field of fractions $F$ and let $A$ be an $F$-algebra
having an $S$-stable basis.
We prove the existence of an $S$-subalgebra $R$ of $A$ lying over $S$ whose localization with respect to $S$ is $A$
(we call such $R$ an $S$-nice subalgebra of $A$).
We also show that there is no such minimal $S$-nice subalgebra of $A$.
Given a valuation $v$ on $F$ with a corresponding valuation domain $O_v$, and an $O_v$-stable basis of $A$ over $F$,
we prove the existence
of a quasi-valuation on $A$ extending $v$ on $F$. Moreover, we prove the existence of an infinite decreasing chain of quasi-valuations on $A$,
all of which extend $v$.
Finally, we present applications for the above existence theorems; for example, we show that
if $A$ is commutative and $\mathcal C$ is any chain of prime ideals of $S$, 
then there exists an $S$-nice subalgebra of $A$, having a chain of prime ideals covering $\mathcal C$.

}\end{abstract} 

\section{Introduction}


Valuation theory has long been a key tool in commutative algebra,
with applications in number theory and algebraic geometry.
It has become a useful tool in the study of finite dimensional division algebras, particulary in
the construction of examples, such as Amitsur's
construction of noncrossed products division algebras. See [Wad]
for a comprehensive survey.

Although valuations provide a powerful tool in studying arithmetic
of fields, it has been difficult to use them in noncommutative settings.
For example, division algebras do not have many valuations and rings with zero divisors
do not have valuations at all.
This has motivated researchers to generalize the notion of valuation.
Attempts to generalize the notion of valuation were made
throughout the last few decades. Morandi's value functions (cf.
[Mor]) and Tignol and Wadsworth's gauges [TW] are examples of such
generalizations. Also see [KZ] for Manis valuations and
PM-valuations, and [Co], [Hu], and [MH] for pseudo-valuations.
See [Sa1] for a brief discussion on these other related theories.

Another approach was initiated recently by the author in developing the notion of a quasi-valuation.
A {\it quasi-valuation} on a ring $R$ is a function
$w : R \rightarrow ~M \cup \{  \infty \}$, where $M$ is a totally
ordered abelian monoid, to which we adjoin an element $\infty$
greater than all elements of $M$, and $w$ satisfies the following
properties:

(B1) $w(0) = \infty$;

(B2) $w(xy) \geq w(x) + w(y)$ for all $x,y \in R$;

(B3) $w(x+y) \geq \min \{ w(x), w(y)\}$ for all $x,y \in R$.

In [Sa1] we mainly developed the theory of quasi-valuations on finite dimensional field extensions and we were able to answer questions
regarding the structure of rings using quasi-valuation theory. More precisely, for a given valuation $v$ on a field $F$, a corresponding valuation domain $O_v$, and a finite field extension $E$, we studied quasi-valuations on $E$ extending $v$ on $F$.
We showed that every such quasi-valuation is dominated by some valuation extending v, and presented several generalizations
of results from valuation theory (see [Sa1, sections 6 and 4]).
We also studied the quasi-valuation rings; namely, the set of all elements of $E$ with values greater or equal to zero.
We proved that the prime spectra of $O_v$ and its quasi-valuation ring are intimately connected.
In addition, a one-to-one correspondence was obtained
between exponential quasi-valuations and integrally closed quasi-valuation rings.
Most importantly, we constructed the filter quasi-valuation, for any algebra over a valuation domain,
and showed that if $A$ is an $F$-algebra and $R$ is an $O_v$-subalgebra of $A$ lying over $O_{v}$ then there exists
a quasi-valuation on $R \otimes_{O_{v}} F$ (called the filter quasi-valuation) extending $v$ on $F$ such that the quasi-valuation ring is equal to $R$ (under the identification of $R$ with $R \otimes_{O_{v}} 1$). In particular, if $R$ is an $O_v$-subalgebra of $A$ lying over $O_{v}$ such that $RF=A$ then there exists
a quasi-valuation on $A$ extending $v$ on $F$.
However, the existence of such subalgebras was not clear and was not proven. Existence theorems of such algebras and others of greater generality will be presented in this paper.

In [Sa2] we studied the structures of algebras over valuation domains using quasi-valuation theory.
We generalized some of the results of [Sa1] and presented additional connections between
a valuation domain $O_v$ and an $O_v$-algebra. We related the prime spectrum of a valuation domain to the prime spectrum of a (not necessarily commutative) algebra over it. We studied the classical lifting conditions of ``lying over" (LO), ``incomparability" (INC), ``going down" (GD) and ``going up" (GU) in ring extensions, as well as a subtler condition called ``strong going between" (SGB), and saw how quasi-valuations play a key role in the situation under discussion (see [Sa2, section 1.2] for the definitions of LO, INC, GD, GU and SGB). Specifically, we presented a necessary and sufficient condition for an $O_{v}$-algebra to satisfy LO over $O_{v}$.
We proved that if $R$ is a torsion-free algebra over $O_{v}$ such that $[R \otimes_{O_{v}}F:F] < \infty $,
then $R$ satisfies INC over $O_{v}$; as a result, we obtained an upper and a lower bound on the size
of the prime spectrum of $R$. Then, we showed that if $R$ is a torsion-free algebra over $O_{v}$ then $R$ satisfies GD over $O_{v}$;
and concluded that any algebra over a commutative valuation ring satisfies SGB over it.
We deduced that a torsion-free algebra over $O_{v}$ satisfies GGD (generalized going down) over $O_{v}$.
Moreover, a sufficient condition for a \qv ring to satisfy GU over $O_v$ was given.

So, in [Sa1] and in some parts of [Sa2] we assumed that a \qv extending the valuation $v$ exists (or equivalently by [Sa1, Theorem 9.19],
we assumed the existence of an appropriate $O_v$-algebra).
A natural and central question would be then: when does such a \qv exist?
In this paper we show that quasi-valuations extending a valuation $v$ on $F$ exists on any finite dimensional $F$-algebras,
and even more generally, on any $F$-algebra having an $O_v$-stable basis
(see Definition \ref{stable} for the definition of an $O_v$-stable basis).
In fact, we prove a more general theorem and apply
it to quasi-valuation theory. Let us quote our first existence theorem: let $S$ be an integral domain which is not a field, let
$F$ be its field of fractions, and let $A$ be an $F$-algebra containing an $S$-stable basis;
then there exists an $S$-subalgebra of $A$ which lies over $S$
and whose localization with respect to $S$ is $A$.
This existence theorem is then followed by several other existence theorems;
for example, we prove that any such $S$-subalgebra of $A$ is not
minimal with respect to inclusion, and we also prove the existence of such $S$-subalgebras of $A$
containing a given ideal of $A$.
Returning to the assumption that $v$ is a valuation on $F$, using the construction of the filter \qv (see section 2),
we deduce the existence of an infinite descending chain of quasi-valuations on $A$, all of which
extend $v$ on $F$. Finally, as applications to the above mentioned existence theorems, in Proposition \ref{every chain is covered} and Theorem \ref{every chain is covered commutative case} we use results from valuation theory and results from [Sa2] about quasi-valuations to prove two interesting propositions
regarding the prime spectra of integral domains and certain algebras over them.

In this paper the symbol $\subset$ means proper inclusion and the symbol $\subseteq$ means inclusion
or equality.

\section{Previous results - construction of the filter \qv}

In this section, for the reader's convenience, we recall from [Sa1] the main steps
in constructing the filter quasi-valuation. For further details
and proofs, see [Sa1, section 9].

The first step is to construct a value monoid, constructed from
the value group of the valuation. We call this value monoid the
cut monoid. We start by reviewing some of the basic
notions of cuts of ordered sets. For further information
on cuts see, for example, [FKK] or [Weh].

\begin{defn} Let $T$ be a totally ordered set. A subset $U$ of $T$ is
called initial if for every $\gamma \in U$ and $\alpha \in T$, if
$\alpha \leq \gamma$ then $\alpha \in U$. A cut $\mathcal
A=(\mathcal A^{L}, \mathcal A^{R})$ of $T$ is a partition of $T$
into two subsets $\mathcal A^{L}$ and $\mathcal A^{R}$, such that,
for every $\alpha \in \mathcal A^{L}$ and $\beta \in \mathcal
A^{R}$, $\alpha<\beta$. \end{defn}

 The set of all cuts $\mathcal A=(\mathcal A^{L}, \mathcal A^{R})$
 of the ordered set $T$ contains the two cuts $(\emptyset,T)$ and
 $(T,\emptyset)$; these are commonly denoted by $-\infty$ and
 $\infty$, respectively. However, we do not use the symbols $-\infty$ and
 $\infty$ to denote the above cuts since we
 define a ``different" $\infty$.

Given $\alpha \in T$, we denote
$$(-\infty,\alpha]=\{\gamma \in T \mid \gamma \leq \alpha \}$$ and
$$(\alpha,\infty)=\{\gamma \in T \mid \gamma > \alpha \}.$$
One defines similarly the sets $(-\infty,\alpha)$ and
$[\alpha,\infty)$.

To define a cut we often write $\mathcal A^{L}=U$, meaning the
$\mathcal A$ is defined as $(U, T \setminus U)$ when $U$ is an
initial subset of $T$. The ordering on the set of all cuts of $T$
is defined by $\mathcal A \leq \mathcal B$ iff $\mathcal A^{L}
\subseteq \mathcal B^{L}$ (or equivalently $\mathcal A^{R}
\supseteq \mathcal B^{R}$).

For a group $\Gamma$, subsets $U,U' \subseteq \Gamma$ and $n \in
\Bbb N$, we define

$$U+U'=\{ \alpha+\beta \mid \alpha \in U, \beta \in U' \};$$

$$nU=\{ s_{1}+s_{2}+...+s_{n} \mid s_{1},s_{2},...,s_{n} \in U
\}.$$

Now, for $\Gamma$ a totally ordered abelian group, $\mathcal
M(\Gamma)$ is called the cut monoid of $\Gamma$;
$\mathcal M(\Gamma)$ is a totally ordered abelian monoid.

For $\mathcal A , \mathcal B \in \mathcal M(\Gamma)$, their (left)
sum is the cut defined by $$(\mathcal A + \mathcal B)^{L}=\mathcal
A^{L} + \mathcal B^{L}.$$ The zero in $\mathcal M(\Gamma)$ is the
cut $((-\infty,0],(0,\infty))$.

 For $\mathcal A \in \mathcal M(\Gamma)$ and
$n \in \Bbb N$,  the cut $n\mathcal A$ is defined by $$(n\mathcal
A)^{L}=n\mathcal A^{L}.$$

Note that there is a natural monomorphism of monoids $\varphi :
\Gamma \rightarrow \mathcal M(\Gamma)$ defined in the following
way: for every $\alpha \in \Gamma$, $$\varphi (\alpha) =
((-\infty,\alpha],(\alpha,\infty)) $$

For $\alpha \in \Gamma$ and $\mathcal B \in \mathcal M(\Gamma)$,
we denote $\mathcal B-\alpha$ for the cut $\mathcal B+(-\alpha)$
(viewing $-\alpha$ as an element of $\mathcal M(\Gamma)$).


\begin{defn} Let $v$ be a valuation on a field $F$ with value group
$\Gamma_{v}$. Let $O_{v}$ be the valuation domain of $ v$ and let
$R$ be an algebra over $O_{v}$. For every $x \in R$, the
$O_{v}$-{\it support} of $x$ in $R$ is the set
$$S^{R/O_{v}}_{x}=\{ a \in O_{v} | xR \subseteq aR\}.$$ We suppress
$R/O_{v}$ when it is understood. \end{defn}

For every $A \subseteq O_{v}$ we denote $(v(A))^{\geq 0}=\{ v(a)
\mid a \in A \}$; in particular,

$$(v(S_{x}))^{\geq 0}=\{ v(a) | a \in S_{x} \};$$ the reason for
this notion is the fact that $(v(S_{x}))^{\geq 0}$ is an initial
subset of $\ (\Gamma_{v}) ^{\geq 0}$.

We define $$v(S_{x})=(v(S_{x}))^{\geq 0} \cup (\Gamma_{v})^{<0};$$
and note that $v(S_{x})$ is an initial subset of $\Gamma_{v}$.

Note that if $A$ and $ B$ are subsets of $O_{v}$ such that $A
\subseteq B$ then $v(A) \subseteq v(B)$.

Recall that we do not denote the cut $(\Gamma_{v},\emptyset) \in
\mathcal M(\Gamma_{v}) $ as $\infty$. So, as usual, we adjoin to
$\mathcal M(\Gamma_{v}) $ an element $\infty$ greater than all
elements of $\mathcal M(\Gamma_{v})$; for every $\mathcal A \in
\mathcal M(\Gamma_{v})$ and $\alpha \in \Gamma_{v}$ we define
$\infty+\mathcal A=\mathcal A+\infty=\infty$ and
$\infty-\alpha=\infty$.

 The following theorem holds for arbitrary algebras $R$.

\begin{thm} \label{theorem from Sa on v,F,gammav,Ov} Let $v$ be a valuation on a field $F$ with value group
$\Gamma_{v}$. Let $O_{v}$ be the valuation domain of $\ v$ and let
$R$ be an algebra over $O_{v}$. Let $\mathcal M(\Gamma_{v})$
denote the cut monoid of $\Gamma_{v}$. Then there exists
a quasi-valuation $w:R \rightarrow \mathcal M(\Gamma_{v}) \cup \{
\infty \}$.
\end{thm}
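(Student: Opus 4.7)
The plan is to define $w$ explicitly using the $O_v$-support machinery set up earlier in this section, and to verify axioms (B1)--(B3) directly. I would set $w(0)=\infty$ and, for $0\neq x\in R$, declare $w(x)=(v(S_x),\Gamma_v\setminus v(S_x))$, where $S_x$ is the $O_v$-support of $x$ and $v(S_x)$ is as in the excerpt. Since $v(S_x)$ was shown to be an initial subset of $\Gamma_v$, this is a legitimate cut, so $w\colon R\to\mathcal M(\Gamma_v)\cup\{\infty\}$ is well defined, and (B1) is immediate from the definition.

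The heart of the argument consists of two support-level lemmas. First, for (B2): if $a\in S_x$ and $b\in S_y$, then $ab\in S_{xy}$. Given $r\in R$, write $yr=bs$ with $s\in R$ (from $yR\subseteq bR$) and $xs=at$ with $t\in R$ (from $xR\subseteq aR$); then, using that $b\in O_v$ is central in $R$ (since $R$ is an $O_v$-algebra), $xyr=x(bs)=b(xs)=bat=(ab)t\in abR$. As a corollary, $S_x\subseteq S_{xy}$, because $a\in S_x$ forces $xyR=x(yR)\subseteq xR\subseteq aR$. Second, for (B3): $S_x\cap S_y\subseteq S_{x+y}$, because $(x+y)R\subseteq xR+yR\subseteq aR$ whenever $a$ lies in both supports.

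With these in hand, (B2) reduces to showing $v(S_x)+v(S_y)\subseteq v(S_{xy})$ by a case analysis on the signs of $\alpha\in v(S_x)$ and $\beta\in v(S_y)$. When $\alpha,\beta\geq 0$, write $\alpha=v(a)$, $\beta=v(b)$ with $a\in S_x,\ b\in S_y$, and apply the first lemma to conclude $\alpha+\beta=v(ab)\in v(S_{xy})$. When exactly one of $\alpha,\beta$ is negative, use $v(S_x)\subseteq v(S_{xy})$ (from the corollary) together with the initiality of $v(S_{xy})$ in $\Gamma_v$ to place $\alpha+\beta$ inside $v(S_{xy})$; the both-negative case is trivial. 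For (B3), assume without loss of generality $w(x)\leq w(y)$, i.e., $v(S_x)\subseteq v(S_y)$, and show $v(S_x)\subseteq v(S_{x+y})$: given $\alpha\in v(S_x)$ with $\alpha\geq 0$, pick $a\in S_x$ with $v(a)=\alpha$; since $\alpha\in v(S_y)$ as well, some $b\in S_y$ has $v(b)=v(a)$, whence $a,b$ are $O_v$-associates, $aR=bR$, and so $a\in S_y$; therefore $a\in S_x\cap S_y\subseteq S_{x+y}$ and $\alpha\in v(S_{x+y})$. Negative $\alpha$ are immediate.

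The main subtlety I expect to have to flag is the correct handling of the cut sum $w(x)+w(y)$ in the mixed-sign cases: by design $v(S_x)$ contains all of $\Gamma_v^{<0}$, and one must combine this with the initiality of $v(S_{xy})$ and the containment $S_x\subseteq S_{xy}$ to keep every sum $\alpha+\beta$ inside $v(S_{xy})$. Everything else is direct bookkeeping with the support sets.
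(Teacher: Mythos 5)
Your construction is exactly the paper's: the paper defines $w(0)=\infty$ and $w(x)^{L}=v(S_{x})$ for $x\neq 0$ (the filter quasi-valuation induced by $(R,v)$), deferring the verification of (B1)--(B3) to [Sa1, section 9], and your support-level lemmas ($ab\in S_{xy}$, $S_x\subseteq S_{xy}$, $S_x\cap S_y\subseteq S_{x+y}$) together with the sign case analysis are the standard verification. The argument is correct; the only point worth noting is that the case $\alpha<0$, $\beta\geq 0$ of (B2) needs $v(S_y)\subseteq v(S_{xy})$ rather than $v(S_x)\subseteq v(S_{xy})$, which follows just as easily from your first lemma.
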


It is shown in the proof of Theorem \ref{theorem from Sa on v,F,gammav,Ov} that
$w$ can be defined by:  $w(0)=\infty$, and
$w(x)=(v(S_{x}),\Gamma_{v} \setminus v(S_{x}))$ for every $0 \neq x \in
 R$; i.e.,  $w(x)^{L}=v(S_{x})$. We call $w$ the
{\it filter quasi-valuation} induced by $(R,v)$.

Let us present several basic properties of the filter quasi-valuation.

\begin{lem} Notation as in Theorem
\ref{theorem from Sa on v,F,gammav,Ov}, assume in addition that
$R$ is torsion free over $O_{v}$; then
$$w(cx)=v(c)+w(x)$$ for every $c \in O_{v}$, $x \in R$.

\end{lem}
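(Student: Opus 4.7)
The plan is to reduce the identity to a set equality in $\Gamma_v$. By the definition of the sum in $\mathcal M(\Gamma_v)$, we have $(v(c)+w(x))^L = (-\infty,v(c)] + v(S_x)$; and because $v(S_x)$ is an initial subset of $\Gamma_v$, this sum coincides with the shift $v(c)+v(S_x):=\{v(c)+\gamma:\gamma\in v(S_x)\}$. Since $w(cx)^L = v(S_{cx})$, the task reduces to proving
\[
v(S_{cx})=v(c)+v(S_x)
\]
as initial subsets of $\Gamma_v$. The degenerate cases $c=0$ or $x=0$ are immediate, as both sides of the original equation then equal $\infty$ under the stated conventions.

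For the inclusion $v(c)+v(S_x)\subseteq v(S_{cx})$, I would take $\delta=v(c)+\gamma$ with $\gamma\in v(S_x)$. If $\gamma=v(b)$ for some $b\in S_x$, then $xR\subseteq bR$ multiplies up to $cxR\subseteq cbR$, so $cb\in S_{cx}$ and $\delta=v(cb)\in v(S_{cx})$. If instead $\gamma<0$, then $\delta<v(c)$; for $\delta<0$ membership in $v(S_{cx})$ is immediate from the artificial negative tail, and for $0\le\delta<v(c)$ I would pick $a\in O_v$ with $v(a)=\delta$, write $c=au$ with $u\in O_v$, and note that $cxR=a(uxR)\subseteq aR$ since $uxR\subseteq R$, giving $a\in S_{cx}$.

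For the reverse inclusion $v(S_{cx})\subseteq v(c)+v(S_x)$, I would take $a\in S_{cx}$. If $v(a)<v(c)$, then $v(a)-v(c)<0$ lies in $v(S_x)$ by definition, placing $v(a)$ in $v(c)+v(S_x)$; similarly every $\delta\in\Gamma_v^{<0}$ satisfies $\delta-v(c)<0$ and so falls on the right side. The main content is the case $v(a)\ge v(c)$: here I would factor $a=cb$ with $b\in O_v$, so that $cxR\subseteq cbR$ yields, for each $r\in R$, an equation $c(xr-br')=0$ for a suitable $r'\in R$. Torsion-freeness of $R$ over $O_v$, combined with $c\ne 0$, forces $xr=br'$, whence $xR\subseteq bR$, i.e., $b\in S_x$, and $v(a)=v(c)+v(b)\in v(c)+v(S_x)$.

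The principal obstacle, and the only place the torsion-free hypothesis enters, is this final cancellation step converting $cxR\subseteq cbR$ into $xR\subseteq bR$; the remaining work is bookkeeping to confirm that the artificial negative tails present in both $v(S_x)$ and $v(S_{cx})$ line up correctly under the shift by $v(c)$, which is a straightforward consequence of $v(c)\ge 0$ and the cut formalism.
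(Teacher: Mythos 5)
Your proof is correct. Note that this paper does not actually contain a proof of this lemma: Section~2 only recalls the statement from [Sa1] (``For further details and proofs, see [Sa1, section 9]''), so there is no in-paper argument to compare yours against. On its own terms, your reduction to the set identity $v(S_{cx})=v(c)+v(S_x)$ is the right move, the identification of $\bigl((-\infty,v(c)]+v(S_x)\bigr)$ with the translate $v(c)+v(S_x)$ is justified since $v(S_x)$ is a nonempty initial subset (indeed $1\in S_x$ always), and your case analysis on the artificial negative tail is exhaustive. You also correctly pinpoint the only place the hypothesis is needed, namely cancelling $c$ in $cxR\subseteq cbR$ to get $xR\subseteq bR$. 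Two points worth making explicit but which are not gaps: torsion-freeness also guarantees $cx\neq 0$ for $c,x\neq 0$, which is what licenses writing $w(cx)^L=v(S_{cx})$ in the first place; and the choice of $a\in O_v$ with $v(a)=\delta$ uses the standard surjectivity of $v$ onto $\Gamma_v$.
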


We note that even in the case where $R$ is a torsion free algebra
over $O_{v}$, one does not necessarily have $w(c \cdot
1_{R})=v(c)$ for $c \in O_{v}$, despite the fact that
$$w(c \cdot 1_{R}) = v(c)+w( 1_{R})$$ by the previous Lemma. The reason is that $w(
1_{R})$ is not necessarily $0$ (see, for example, [Sa1, Ex. 9.28]).

\begin{rem} Note that if $R$ is a torsion free algebra over
$O_{v}$, then there is an embedding $R \hookrightarrow R
\otimes_{O_{v}}F$; in this case the quasi-valuation on $R
\otimes_{O_{v}}F$ extends the quasi-valuation on $R$. \end{rem}


\begin{lem} Let $v, F, \Gamma_{v}$ and $O_{v}$ be as in Theorem
\ref{theorem from Sa on v,F,gammav,Ov}. Let $R$ be a torsion free
algebra over $O_{v}$, $S$ a multiplicative closed subset of
$O_{v}$, $0 \notin S$, and let $w: R \rightarrow M \cup \{ \infty
\}$ be any quasi-valuation where $M$ is any totally ordered
abelian monoid containing $\Gamma_{v}$ and $w(cx)=v(c)+w(x)$ for
every $c \in O_{v}$, $x \in R$. Then there exists a
quasi-valuation $W$ on $R \otimes_{O_{v}}O_{v}S^{-1}$, extending
$w$ on $R$ (under the identification of $R$ with $R
\otimes_{O_{v}} 1$), with value monoid $ M \cup \{ \infty
\}$.\end{lem}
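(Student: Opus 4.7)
The natural candidate is to define, for $x \in R$ and $s \in S$,
\[
W(x \otimes s^{-1}) \;=\; w(x) - v(s),
\]
where subtraction by $v(s) \in \Gamma_v$ makes sense in $M \cup \{\infty\}$ because $\Gamma_v$ sits inside $M$ as a group (and we set $\infty - v(s) = \infty$). The first step is to observe that, since $R$ is torsion free over $O_v$ and $S$ is a multiplicative subset of $O_v$ with $0 \notin S$, the localization $R \otimes_{O_v} O_v S^{-1}$ is obtained by formally inverting the elements of $S$ with no new torsion appearing; in particular every element has the form $x \otimes s^{-1}$ and two such representatives $x \otimes s^{-1}$, $y \otimes t^{-1}$ agree iff $tx = sy$ in $R$.

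The key obstacle, and the only place where the hypotheses are really used, is well-definedness. Given $tx = sy$ in $R$, I would apply the scaling assumption $w(cr) = v(c) + w(r)$ to both sides to get $v(t) + w(x) = w(tx) = w(sy) = v(s) + w(y)$, whence $w(x) - v(s) = w(y) - v(t)$ in $M$. Without the torsion-freeness one could not pass from the localization equality to $tx = sy$ in $R$, and without the scaling property one could not transport the equality to $M$.

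Once $W$ is well defined, the three quasi-valuation axioms follow by straightforward manipulation using the corresponding axioms for $w$ and the scaling property. Explicitly: (B1) is immediate from $w(0) = \infty$; for (B2), writing the product as $(xy) \otimes (st)^{-1}$ gives
\[
W\!\left((x \otimes s^{-1})(y \otimes t^{-1})\right) \;=\; w(xy) - v(s) - v(t) \;\geq\; \bigl(w(x) - v(s)\bigr) + \bigl(w(y) - v(t)\bigr);
\]
and for (B3), writing the sum as $(tx + sy) \otimes (st)^{-1}$ and using $w(tx) = v(t) + w(x)$, $w(sy) = v(s) + w(y)$ yields
\[
W\!\left(x \otimes s^{-1} + y \otimes t^{-1}\right) \;\geq\; \min\bigl\{w(x) - v(s),\, w(y) - v(t)\bigr\}.
\]

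Finally, identifying $x \in R$ with $x \otimes 1 \in R \otimes_{O_v} O_v S^{-1}$ one has $W(x \otimes 1) = w(x) - v(1) = w(x)$, so $W$ extends $w$, and its value monoid is $M \cup \{\infty\}$ as required. The main (in fact only) subtle point is the well-definedness argument; everything else is a direct verification.
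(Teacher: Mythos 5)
Your proof is correct. The paper itself states this lemma without proof, recalling it from [Sa1, Section 9], so there is no in-paper argument to compare against; your construction $W(x \otimes s^{-1}) = w(x) - v(s)$ is the natural (and standard) one, and you correctly identify the crux: torsion-freeness reduces equality in the localization to $tx = sy$ in $R$, and the scaling hypothesis $w(cx) = v(c) + w(x)$ transports that equality to $M$, with the remaining axioms following by routine manipulation since $-v(s) \in \Gamma_v \subseteq M$ is invertible.
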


\begin{thm} Let $v, F, \Gamma_{v}, O_{v}$ and $\mathcal M
(\Gamma_{v})$ be as in Theorem \ref{theorem from Sa on
v,F,gammav,Ov}. Let $R$ be a torsion free algebra over $O_{v}$ and
let $w$ denote the filter quasi-valuation induced by $(R,v)$; then
there exists a quasi-valuation $W$ on $R \otimes_{O_{v}}F$,
extending $w$ on $R$, with value monoid $\mathcal M(\Gamma_{v})
\cup \{ \infty \}$ and $O_{W}=R \otimes_{O_{v}} 1$. \end{thm}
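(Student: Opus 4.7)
The plan is to obtain $W$ as an application of the preceding localization lemma, and then to identify its quasi-valuation ring by a direct cut-theoretic calculation. Taking the multiplicative set $S = O_v \setminus \{0\}$ one has $F = O_v S^{-1}$, so $R \otimes_{O_v} F = R \otimes_{O_v} O_v S^{-1}$. The homogeneity hypothesis $w(cx) = v(c) + w(x)$ needed to invoke that lemma is furnished by the first lemma after Theorem \ref{theorem from Sa on v,F,gammav,Ov}. This yields a quasi-valuation $W$ on $R \otimes_{O_v} F$ extending $w$ with value monoid $\mathcal M(\Gamma_v) \cup \{\infty\}$. From the construction in that lemma, $W$ is given on elementary tensors by $W(r \otimes s^{-1}) = w(r) - v(s)$ for $r \in R$ and $s \in S$, and the only remaining work is to verify that $\{z \in R \otimes_{O_v} F : W(z) \geq 0\}$ equals $R \otimes_{O_v} 1$.

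The inclusion $R \otimes_{O_v} 1 \subseteq O_W$ is immediate from the definition of the support: for any $r \in R$ we have $rR \subseteq R = 1 \cdot R$, so $1 \in S_r$, and hence $0 = v(1) \in v(S_r) = w(r)^L$. Therefore $(-\infty, 0] \subseteq w(r)^L$, which says $w(r) \geq 0$ as cuts, and so $W(r \otimes 1) = w(r) \geq 0$ for every $r \in R$.

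For the reverse inclusion, take $z \in O_W$ and write $z = r \otimes s^{-1}$ with $r \in R$ and $s \in S$. The condition $W(z) \geq 0$ becomes the cut inequality $w(r) \geq v(s)$, which unfolds to $(-\infty, v(s)] \subseteq v(S_r)$; in particular $v(s) \in v(S_r)$, so there exists $a \in S_r$ with $v(a) = v(s)$. By the definition of $S_r$, $rR \subseteq aR$, so $r = a r'$ for some $r' \in R$; and $v(a) = v(s)$ forces $a/s$ to be a unit $u \in O_v^\times$, i.e., $a = su$. Combining, $r = s(ur')$ in $R$, whence
\[
z \;=\; r \otimes s^{-1} \;=\; s(ur') \otimes s^{-1} \;=\; ur' \otimes 1 \;\in\; R \otimes_{O_v} 1.
\]
I expect this last step to be the principal obstacle: the earlier steps are essentially citation plus bookkeeping, but the reverse inclusion is where one must translate the cut-monoid inequality $w(r) \geq v(s)$ back into an honest divisibility statement in $R$ via the set-theoretic definition of the support $S_r$, and then use torsion-freeness of $R$ so that the manipulation of elementary tensors faithfully reflects the computation $r = sur'$ performed inside $R$.
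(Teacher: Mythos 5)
Your proof is correct and follows exactly the route the paper intends: this theorem is recalled from [Sa1, section 9] without proof here, and the intended derivation is precisely your combination of the homogeneity lemma ($w(cx)=v(c)+w(x)$ for torsion-free $R$) with the localization lemma applied to $S=O_v\setminus\{0\}$, followed by the identification of $O_W$. The only ingredient not extractable from the statements quoted in this paper is the explicit formula $W(r\otimes s^{-1})=w(r)-v(s)$, which you correctly import from the construction in [Sa1] (and which is representation-independent by torsion-freeness); your cut-monoid computation showing $O_W=R\otimes_{O_v}1$ is sound.
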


It is not difficult to see that for $W$ as in the previous
Theorem, $(\emptyset, \Gamma_{v}) \notin im(W).$ This $W$ is also
called the filter \qv induced by $(R,v)$.

We conclude the following important theorem (see [Sa1, Theorem 9.34]),

\begin{thm} \label{existence of qv from Sa} Let $v, F, \Gamma_{v}, O_{v}$ and $\mathcal
M(\Gamma_{v})$ be as in Theorem \ref{theorem from Sa on
v,F,gammav,Ov} and let $A$ be an $F$-algebra. Let $R$ be a subring
of $A$ such that $R \cap F =O_{v}$. Then there exists a
quasi-valuation $W$ on $RF$ with value monoid $\mathcal M
(\Gamma_{v})\cup \{ \infty \}$ such that $R=O_{W}$ and {\bf $ W $
extends $v$ (on $F$)}.\end{thm}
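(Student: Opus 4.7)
The plan is to bootstrap from the filter quasi-valuation machinery already built in this section: construct $w$ on $R$, extend it to $R \otimes_{O_v} F$, identify the latter with $RF$ inside $A$, and read off the required properties.

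First I would note that $R \cap F = O_v$ forces $O_v \subseteq R$ (the identity $1_A$ lies in both $R$ and $F$), so $R$ is an $O_v$-algebra. Because $R$ is a subring of the $F$-algebra $A$, multiplication by any nonzero element of $O_v$ is injective on $R$, so $R$ is torsion-free over $O_v$. Theorem \ref{theorem from Sa on v,F,gammav,Ov} then supplies a filter quasi-valuation $w : R \to \mathcal M(\Gamma_v) \cup \{\infty\}$ with $w(x)^L = v(S_x)$, and the following lemma yields the homogeneity $w(cx) = v(c) + w(x)$ for $c \in O_v$ and $x \in R$.

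The pivotal computation is $w(1_R) = 0$. By definition $S_{1_R} = \{a \in O_v : R \subseteq aR\}$; if $a \in O_v$ had $v(a) > 0$, then $R \subseteq aR$ would force $a^{-1} \in R \cap F = O_v$, contradicting $v(a^{-1}) < 0$. Hence $S_{1_R}$ is precisely the group of units of $O_v$, so $w(1_R) = ((-\infty,0],(0,\infty))$, the zero cut of $\mathcal M(\Gamma_v)$. Combined with the homogeneity, this gives $w(c \cdot 1_R) = v(c)$ for every $c \in O_v$, so $w$ already restricts to $v$ on the canonical copy of $O_v$ in $R$.

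Next I would invoke the extension theorem (the unlabeled theorem immediately preceding the present one) to obtain a quasi-valuation $W$ on $R \otimes_{O_v} F$ extending $w$, with value monoid $\mathcal M(\Gamma_v) \cup \{\infty\}$ and $O_W = R \otimes_{O_v} 1$. To transport this to $RF$ I would verify that the multiplication map $\mu : R \otimes_{O_v} F \to RF$, $r \otimes f \mapsto rf$, is a ring isomorphism: surjectivity is immediate because $F$ is central in $A$ and $F = (O_v \setminus \{0\})^{-1} O_v$; injectivity follows from the torsion-freeness of $R$, which identifies $R \otimes_{O_v} F$ with the central localization $(O_v \setminus \{0\})^{-1} R \subseteq A$. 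Under $\mu$ the subring $R \otimes 1$ is carried onto $R$, so $O_W = R$ as required.

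The step I expect to be most delicate is verifying that $W$ extends $v$ on $F$. Writing $f = c/s$ with $c, s \in O_v$ and $s \neq 0$, the localization description of the extension yields $W(1 \otimes f) = w(c \cdot 1_R) - v(s) = v(c) - v(s) = v(f)$, using the calculation $w(c \cdot 1_R) = v(c)$ established above. Under $\mu$ this reads $W(f) = v(f)$ for every $f \in F$. Should this identity not be packaged directly in the quoted extension theorem, it can be recovered by applying the quasi-valuation inequalities to the relation $s \cdot (1 \otimes f) = c \cdot 1$ and combining them with the now-established equalities $W(c \cdot 1) = v(c)$ and $W(s \cdot 1) = v(s)$.
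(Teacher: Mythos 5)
The paper does not prove this statement at all: it is quoted verbatim from [Sa1, Theorem 9.34] in the background section, whose results are recalled ``for the reader's convenience'' with an explicit pointer to [Sa1, section 9] for proofs. So there is no in-paper argument to compare against; what can be said is that your reconstruction assembles exactly the machinery the section recalls (filter quasi-valuation on $R$, homogeneity $w(cx)=v(c)+w(x)$ for torsion-free $R$, extension to $R\otimes_{O_v}F$ with $O_W=R\otimes 1$, identification of $R\otimes_{O_v}F$ with $RF$), and the one genuinely nontrivial point --- that the hypothesis $R\cap F=O_v$ forces $S_{1_R}$ to be the unit group of $O_v$, hence $w(1_R)=0$ and $w(c\cdot 1_R)=v(c)$ --- is exactly the right observation; note the paper itself warns that $w(1_R)=0$ can fail for a general torsion-free $O_v$-algebra, and your argument correctly isolates where the lying-over hypothesis is used.

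One caveat on your final step: the fallback argument via (B2) applied to $s\cdot(1\otimes f)=c\cdot 1$ only yields $W(1\otimes f)\leq v(c)-v(s)$, and the reverse inequality cannot be extracted from (B1)--(B3) together with $W(c\cdot 1)=v(c)$ alone (all the superadditivity inequalities point the same way). So the claim $W|_F=v$ genuinely depends on the explicit formula for the localized quasi-valuation, $W(x\otimes s^{-1})=w(x)-v(s)$, which is how the construction in [Sa1] proceeds but is not stated in the existence lemma as quoted here. Your primary route, which invokes that formula, is the correct one; the fallback should be dropped or strengthened.
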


\section{Existence Theorems}



In this section $S$ denotes an integral domain which is not a field,
$F$ denotes its field of fractions, and $A \neq F$ is an $F$-algebra, usually taken to contain an $S$-stable basis.

Assuming $A$ contains an $S$-stable basis, we prove the existence of $S$-subalgebras of $A$ which lie over $S$
and whose localizations with respect to $S$ is $A$. we also show that any such $S$-subalgebra of $A$ is not
minimal with respect to inclusion.
Moreover, we prove the existence of such $S$-subalgebras of $A$
containing a given ideal of $A$.

We conclude that for any such
$F$-algebra there exists a \qv on $A$ extending a given valuation $v$
on $F$. In fact, there exists an infinite descending chain of quasi-valuations on $A$, all of which
extend $v$ on $F$.

At the end, we use these existence theorems to show that any chain of prime ideals of $S$
can be covered by a chain of prime ideals of some $S$-subalgebra of $A$ with the properties mentioned above.


Let $v$ be a valuation on $F$ with corresponding valuation domain $O_v$.
By Theorem \ref{existence of qv from Sa}, one can construct a \qv on $A$ when one is given an $O_v$-subalgebra of $A$ lying over $O_v$ and whose localization with respect to
$O_v$ is equal to $A$. Our preliminary goal is to prove the existence of such subalgebras of $A$; but in fact, we prove a more general theorem,
replacing the valuation domain with an integral domain.
Thus, we focus on the existence of such $S$-subalgebras of $A$.
We begin, hence, with an obvious definition.

\begin{defn} 
Let $R$ be an $S$-subalgebra of $A$. We say
that $R$ is an $S$-nice subalgebra of $A$ if
$R \cap F=S$ and $R F=A$. In other words, $R$ is an $S$-nice subalgebra of $A$ if $R$ is lying over $S$ and
the localization of $R$ with respect to $S$ is equal to $A$.
\end{defn}

The following lemma is easy to prove and we shall not prove it here.

\begin{lem} \label{RF=A iff R contains a basis}
Let $R$ be an $S$-subalgebra of $A$. Then, $R  F=A$ iff $R$ contains a basis of $A$ over $F$.
\end{lem}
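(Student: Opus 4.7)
The plan is to prove the two directions separately, both reducing to standard vector-space observations that use the fact that $F$ lies in the centre of the $F$-algebra $A$.

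For the direction $(\Leftarrow)$, assume $R$ contains a basis $B$ of $A$ over $F$. Every $a \in A$ is a finite $F$-linear combination $a = \sum f_i b_i$ with $b_i \in B \subseteq R$ and $f_i \in F$. Since $F$ commutes with everything in $A$, each summand lies in $RF$, so $A \subseteq RF$. The reverse inclusion $RF \subseteq A$ is automatic because $R,F \subseteq A$ and $A$ is closed under multiplication. This direction is immediate.

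For the direction $(\Rightarrow)$, assume $RF = A$. Unwinding the definition, every element of $A$ is a finite sum $\sum r_i f_i$ with $r_i \in R$ and $f_i \in F$, which is precisely the statement that $R$, regarded as a subset of the $F$-vector space $A$, is a spanning set. The key step is then the standard linear-algebraic fact that any spanning set of a vector space contains a basis (proved in the infinite-dimensional case via Zorn's lemma applied to linearly independent subsets of $R$). Extracting such a basis $B \subseteq R$ gives exactly what is required.

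There is essentially no obstacle here beyond recognizing this translation: the ring structure on $A$ plays no role once the problem is recast as a statement about the $F$-vector-space structure, and the centrality of $F$ in $A$ makes the two sides of $RF = A$ behave as one expects. This is presumably why the authors declare the lemma too easy to prove in the paper.
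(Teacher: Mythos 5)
Your proof is correct, and it is exactly the standard argument the paper has in mind when it declares the lemma ``easy to prove'' and omits the proof: the reverse direction is immediate, and the forward direction reduces to the fact that a spanning set of an $F$-vector space contains a basis. Nothing to add.
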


To be able to study more efficiently infinite dimensional algebras over $F$, we introduce the following definition.

\begin{defn} \label{stable} Let $B$ be a basis of $A$ over $F$. We say that $B$ is $S$-stable
if there exists a basis $C$ of $A$ over $F$ such that for all $c \in C$ and $b \in B$,
one has $cb \in \sum_{y \in B}  Sy$. We also say that $C$ stabilizes $B$ (whenever $S$ is understood).


\end{defn}

\begin{rem} Note that if $B$ is closed under multiplication then $B$ is $S$-stable. Thus, for example,
every free (noncommutative) $F$-algebra with an arbitrary set of generators has an $S$-stable basis;
in particular, every polynomial algebra with an arbitrary set of indeterminates over $F$ has an $S$-stable basis.\end{rem}

The following two remarks are easy to prove. 

\begin{rem} \label{S_i contained in S_2} Let $S_1 \subseteq S_2$ be integral domains with field of fractions $F$. If $B$
is an $S_1$-stable basis of $A$ over $F$, then $B$ is
an $S_2$-stable basis of $A$ over $F$.

\end{rem}

\begin{rem} Let $\{ R_i \}_{i=1}^{n}$ be a finite set of $S$-subalgebras of $A$ such that $R_iF=A$ for all
$1 \leq i \leq n$. Then $R=\bigcap_{i=1}^{n} R_i$ is an $S$-subalgebra of $A$ satisfying $R  F=A$.
In particular, an intersection of a finitely many $S$-nice subalgebras of $A$ is an $S$-nice subalgebra of $A$.

\end{rem}



Although we do not know whether an $S$-stable basis always exists, one can modify a given $S$-stable basis,
as shown in the following lemma.

\begin{lem} \label{if exists basis then exists with 1}

Let $B$ be an $S$-stable basis of
$A$ over $F$ and let $0 \neq x_0  \in A \setminus B$. Then there exists an $S$-stable basis
containing $x_0$. More precisely, there exists an $S$-stable basis of the form $\{ x_0 \} \cup B \setminus \{ b_0 \}$
for some $b_0 \in B$.

\end{lem}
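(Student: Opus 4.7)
The plan is to find the required basis by a basis exchange on $B$ and then to build a new stabilizer by rescaling $C$ by a single nonzero scalar $\delta \in S$. Write $x_0 = \sum_{b \in B} \alpha_b \, b$ with $\alpha_b \in F$, and let $B_0 := \{b \in B : \alpha_b \neq 0\}$, which is finite and nonempty. Pick any $b_0 \in B_0$ and set $B' := \{x_0\} \cup (B \setminus \{b_0\})$; since $\alpha_{b_0} \neq 0$, the relation $b_0 = \alpha_{b_0}^{-1}x_0 - \sum_{b \in B_0 \setminus \{b_0\}} \alpha_{b_0}^{-1}\alpha_b \, b$ together with a standard exchange argument shows that $B'$ is an $F$-basis of $A$.

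To choose the scaling factor, I use that $F$ is the fraction field of $S$: pick $q \in S$ such that $\pi_b := q\alpha_b \in S$ for every $b \in B_0$ (which exists by clearing denominators over the finite set $B_0$), and set $\delta := q \pi_{b_0} = q^2 \alpha_{b_0} \in S$. The two identities that do all the work are
\[
\delta \alpha_{b_0}^{-1} = q^2 \in S \qquad \text{and} \qquad \delta \alpha_b = \pi_{b_0} \pi_b \in S \text{ for every } b \in B_0.
\]
With these in hand I take $C' := \{\delta c : c \in C\}$; since $\delta \neq 0$ this is an $F$-basis of $A$, and I claim that $C'$ stabilizes $B'$.

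To verify the claim I would expand $(\delta c) b'$ for $c \in C$ and $b' \in B'$ by using the hypothesis that each product $c b$ (for $b \in B$) is a finite $S$-linear combination of elements of $B$, isolating the coefficient of $b_0$, and substituting the above expression for $b_0$. In the case $b' \in B \setminus \{b_0\}$ the coefficient of $x_0$ reduces to $q^2$ times an $S$-coefficient, and each coefficient of a remaining basis element reduces to an $S$-coefficient minus a $q\pi_b$ times another $S$-coefficient, both lying in $S$. The case $b' = x_0$ is the only step requiring real care: here one first writes $\delta c x_0 = \sum_{b \in B_0}(\delta \alpha_b)(c b)$, which brings in the factors $\pi_{b_0}\pi_b \in S$, and only then substitutes for $b_0$. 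The main obstacle is simply the bookkeeping in this last step; finiteness of $B_0$ and of the support of each $c b$ keeps every intermediate sum finite, so after substitution every resulting coefficient is a finite $S$-linear combination of products of $\pi_b$'s with elements of $S$, hence in $S$, and $B'$ is $S$-stable via $C'$.
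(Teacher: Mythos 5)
Your proof is correct and follows essentially the same route as the paper's: exchange $b_0$ for $x_0$ to get $B'$, then rescale the stabilizing basis $C$ by elements of $S$ chosen to clear the denominators that arise from re-expressing $b_0$ (and $x_0$) in terms of $B'$. The only difference is that you arrange a single uniform scalar $\delta$ for all of $C$ (and your identities $\delta\alpha_{b_0}^{-1}=q^2$ and $\delta\alpha_b=\pi_{b_0}\pi_b$ do indeed make the bookkeeping go through), whereas the paper uses a factor $s_c s_0$ depending on $c$; both yield the lemma.
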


\begin{proof} Let $C$ be a basis that stabilizes $B$.
Let $B'$ be a basis such that $x_0 \in B'$ and $B' \setminus \{ x_0 \}=B \setminus \{ b_0 \}$ for an appropriate $b_0 \in B$ (of course,
one can write $x_0$ as a linear combination of elements of $B$ and take any
element of $B$ appearing in this linear combination).
Take $s_0 \in S$ such that $s_0 b_0 \in \sum_{y \in B'} Sy$.
It is now easy to check that for all $c \in C$ and $b \in B' \setminus \{ x_0 \}$, we have $s_0 c \cdot b \in \sum_{y \in B'} Sy$;
we note by passing that the basis $B''=\{ s_0 b_0 \} \cup B \setminus \{ b_0 \}$ is $S$-stable, since
the basis $C'=\{ s_0 \cdot c \}_{c \in C}$ stabilizes it.
As for $x_0$, for every $s_0 \cdot c \in C'$ there exists $s_c \in S$ such that $s_c s_0 c \cdot x_0 \in \sum_{y \in B'} Sy$.
Thus, the basis $\{ s_c s_0 \cdot c \}_{c \in C}$ stabilizes $B'$.

\end{proof}

By Lemma \ref{if exists basis then exists with 1} and induction we conclude:

\begin{lem} \label{if exists basis then exists with linearly independent set}

Let $B$ be an $S$-stable basis of
$A$ over $F$ and let $C \subseteq A$ be a finite linearly independent set. Then there exists an $S$-stable basis
containing $C$. More precisely, there exists an $S$-stable basis of the form $C \cup B \setminus C_1$
for some finite set $C_1 \subseteq B$.

\end{lem}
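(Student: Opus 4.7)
The plan is a straightforward induction on the cardinality $n$ of $C$, using Lemma \ref{if exists basis then exists with 1} as both the base case and the engine of each inductive step. When $n = 0$ we take $B$ itself and $C_1 = \emptyset$; when $n = 1$ the statement is exactly Lemma \ref{if exists basis then exists with 1} (if the single element of $C$ already lies in $B$, take $C_1 = \emptyset$; otherwise apply the lemma directly).

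For the inductive step, write $C = C' \cup \{x_n\}$ with $|C'| = n-1$ linearly independent, and apply the induction hypothesis to $C'$ to obtain an $S$-stable basis of the form $B' = C' \cup (B \setminus C_1')$ for some finite $C_1' \subseteq B$. If $x_n \in B'$, then $B'$ already contains $C$ and we are done with $C_1 = C_1'$. Otherwise, apply Lemma \ref{if exists basis then exists with 1} to the $S$-stable basis $B'$ and the vector $x_n \notin B'$; this produces an $S$-stable basis of the form $\{x_n\} \cup B' \setminus \{b_0\}$ for some $b_0 \in B'$. Rewriting,
\[
\{x_n\} \cup B' \setminus \{b_0\} \;=\; C \cup (B \setminus C_1') \setminus \{b_0\},
\]
so taking $C_1 = C_1' \cup \{b_0\}$ yields the desired form, provided $b_0$ can be placed in $B$ rather than in $C'$.

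The only (mild) obstacle is precisely to arrange $b_0 \in B \setminus C_1'$, so that $C_1 \subseteq B$ as required. Inspecting the proof of Lemma \ref{if exists basis then exists with 1}, one sees that $b_0$ can be taken to be any element of $B'$ appearing with nonzero coefficient in the expansion of $x_n$ with respect to $B'$. If all such elements lay in $C' = \{x_1, \ldots, x_{n-1}\}$, then $x_n$ would be an $F$-linear combination of $x_1, \ldots, x_{n-1}$, contradicting the linear independence of $C$. Hence some admissible $b_0$ lies in $B \setminus C_1' \subseteq B$, which gives $C_1 \subseteq B$ and closes the induction.
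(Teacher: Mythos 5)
Your proof is correct and takes exactly the route the paper intends: the paper gives no written proof at all, merely stating that the lemma follows ``by Lemma \ref{if exists basis then exists with 1} and induction.'' Your additional care in choosing $b_0$ outside $C'$ (using linear independence of $C$) so that $C_1 \subseteq B$ fills in the one detail the paper leaves implicit.
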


In the following proposition we prove the existence of an $S$-subalgebra of $A$ lying over $S$.
\begin{prop} \label{lying over $S$}

Let $B$ be a basis of
$A$ over $F$. 
Let $M=\sum_{b \in B}  Sb$ and $R=\{ x \in
A \mid xM \subseteq M\}$. Then $R$ is an $S$-subalgebra of $A$ satisfying $R \cap F=S$.

\end{prop}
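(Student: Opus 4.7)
The plan is to prove the two assertions (ring structure with $S \subseteq R$, and $R \cap F = S$) separately, treating $R$ essentially as the (left) idealizer of the $S$-submodule $M$ in $A$.

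First I would show that $R$ is a subring of $A$. The arguments are formal: $0 \cdot M = 0 \subseteq M$ and $1 \cdot M = M$, so $0,1 \in R$; if $x,y \in R$, then $(x+y)M \subseteq xM + yM \subseteq M$ and $(xy)M = x(yM) \subseteq xM \subseteq M$, hence $x+y, xy \in R$. To see $S \subseteq R$, observe that for $s \in S$ one has $sM = s\sum_{b\in B} Sb = \sum_{b \in B} (sS)b \subseteq \sum_{b \in B} Sb = M$. Thus $R$ is an $S$-subalgebra of $A$.

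Next I would verify $R \cap F = S$. The inclusion $S \subseteq R \cap F$ follows from the previous paragraph (together with $S \subseteq F$). For the converse, let $\alpha \in R \cap F$ and pick any fixed $b_0 \in B$. Since $\alpha \in R$, we have $\alpha b_0 \in M$, so there exist elements $s_b \in S$, almost all zero, such that
\[
\alpha b_0 = \sum_{b \in B} s_b b.
\]
On the other hand, viewing $\alpha b_0$ as an element of the $F$-vector space $A$ with basis $B$, its unique expansion in $B$ is $\alpha \cdot b_0$ (coefficient $\alpha$ on $b_0$, zero on all other basis elements). Comparing the two expansions using $F$-linear independence of $B$ forces $s_{b_0} = \alpha$ and $s_b = 0$ for $b \neq b_0$; in particular $\alpha = s_{b_0} \in S$.

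There is no real obstacle here — the argument is purely formal; the only slightly non-trivial point is noting that, because $B$ is an $F$-basis and $S \subseteq F$, the $S$-module $M$ is free on $B$, so coefficients of elements of $M$ against $B$ are unambiguously determined and lie in $S$. This uniqueness is exactly what forces the element of $F$ acting on a basis vector to live in $S$.
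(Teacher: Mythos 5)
Your proposal is correct and follows essentially the same route as the paper: the paper likewise observes that $M$ being an $S$-submodule gives $S \subseteq R$ and the $S$-algebra structure, and then for $\alpha \in R \cap F$ expands $\alpha b$ in $B$ with coefficients in $S$ and invokes $F$-linear independence of $B$ to force $\alpha \in S$. Your write-up just spells out the formal ring-axiom checks and the coefficient comparison in more detail.
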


\begin{proof} First note that $M$ is an $S$-submodule
of $A$ and thus $S \subseteq R$ and $R$ is an $S$-subalgebra of $A$.
Now, let $\alpha \in R \cap F$ and take $b \in B $. Then, $\alpha b=\sum_{i=1}^{k}s_i b_i$ for some $s_i \in S$ and $b_i \in B$.
Since the set $\{ b \} \cup \{ b_i\}_{i=1}^{k} \subseteq B$ is linearly independent, $\alpha \in S$.



\end{proof}

Note that in Proposition \ref{lying over $S$}, $B$ is merely a basis of $A$ over $F$
and we do not assume that $B$ is $S$-stable.

\begin{rem} If one takes in Proposition \ref{lying over $S$} a basis $B$ containing $1$ (or any invertible element $u \in S$), then $R \subseteq M$ and $M \cap F = S$.

\end{rem}

\begin{proof} We prove the remark for the case in which $1 \in B$. For the case in which $B$ contains an invertible element $u \in S$ the proof is similar. Now, $R \subseteq  M$ and $S \subseteq  M$ because $1 \in M$. Thus, it is enough to show that $M \cap F \subseteq S$.
So, let $\alpha \in M \cap F$. Since $F$ is the quotient field of $S$, there exists $t \in S $ such that $t \alpha \in S$; clearly, $t \alpha \in M$.
Also, since $\alpha \in M$, one can write
$\alpha=\sum_{i=1}^{k}s_i x_i$ with $s_i \in S$ and $x_i \in B$. Hence, $t \alpha=\sum_{i=1}^{k}ts_i x_i$; but $t \alpha \in S \cap M$ and thus
it can be uniquely written as a linear combination of elements of $B$ in the form $t \alpha=t \alpha \cdot 1$. Therefore, the presentation of $t \alpha$ as $\sum_{i=1}^{k}ts_i x_i$ must be equal to
$t \alpha \cdot 1$; i.e., $k=1$ and $x_1=1$. Consequently, $ \alpha \in S$.

\end{proof}


\begin{rem} In view of the previous remark, $M$ does not necessarily lie over $S$. Indeed, take any basis
$B$ containing an element $\alpha \in F \setminus S$.
\end{rem}

As mentioned above, the existence of an $S$-stable basis is not known in the general case, for any $F$-algebra.
However, when $A$ is finite dimensional over $F$,
the following proposition shows not only the existence of such a basis, but even more so that every basis is $S$-stable.

\begin{prop} \label{RF=A FINITE}
If $A$ is finite dimensional over $F$, then every basis of $A$ over $F$ is $S$-stable.
\end{prop}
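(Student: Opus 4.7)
The plan is to exploit the fact that finite dimensionality gives us only finitely many structure constants, which then allows us to clear denominators all at once.

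Fix a basis $B=\{b_1,\dots,b_n\}$ of $A$ over $F$. For every ordered pair $(i,j)$, expand the product $b_ib_j=\sum_{k=1}^{n}\alpha_{ijk}b_k$ with uniquely determined $\alpha_{ijk}\in F$. Since $F$ is the field of fractions of $S$, each $\alpha_{ijk}$ can be written as $\alpha_{ijk}=p_{ijk}/q_{ijk}$ with $p_{ijk}\in S$ and $0\neq q_{ijk}\in S$. There are only finitely many such denominators, so I would set $s=\prod_{i,j,k}q_{ijk}\in S\setminus\{0\}$, which gives $s\alpha_{ijk}\in S$ for every triple $(i,j,k)$.

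Now I would define the candidate stabilizing basis by $C=\{sb_1,\dots,sb_n\}$. Because $s\neq 0$ is invertible in $F$, multiplication by $s$ is an $F$-linear automorphism of $A$, so $C$ is indeed a basis of $A$ over $F$. It remains only to verify the stability condition: for any $c=sb_i\in C$ and any $b_j\in B$,
\[
c\cdot b_j=(sb_i)b_j=s\sum_{k=1}^{n}\alpha_{ijk}b_k=\sum_{k=1}^{n}(s\alpha_{ijk})b_k\in\sum_{y\in B}Sy,
\]
as required. Thus $C$ stabilizes $B$, and $B$ is $S$-stable.

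There is essentially no obstacle here: the whole proof rests on the fact that clearing denominators is feasible precisely because $\dim_F A<\infty$ produces only finitely many structure constants $\alpha_{ijk}$. The only minor point to keep straight is that the stabilizing basis $C$ need not be $B$ itself (in general one cannot take $s=1$), but the definition of $S$-stability allows $C$ to be any basis, and rescaling $B$ by a single common denominator $s\in S$ is the cheapest way to produce one.
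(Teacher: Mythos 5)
Your proof is correct and is essentially the paper's argument: both clear the denominators of the finitely many structure constants and rescale $B$ by elements of $S$ to obtain the stabilizing basis $C$ (the paper uses a separate scalar $\delta_i$ for each $b_i$ rather than your single global $s$, a purely cosmetic difference). No gaps.
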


\begin{proof} Let $B=\{ x_{1},x_{2},...,x_{n}\}$ be a basis of $A$ over $F$ and let $M=\sum_{i=1} ^{n}
Sx_{i}$. For all $1 \leq i,j \leq n$ one can write $x_i x_j = \sum_{k=1} ^{n} \frac{\alpha_{ijk}}{\beta_{ijk}} x_k$
where $\alpha_{ijk},\beta_{ijk} \in S$. Let $\gamma_{ij}=\prod_{k=1}^{n} \beta_{ijk}$; then $\gamma_{ij}x_i x_j \in M$.
Let $\delta_i= \prod_{j=1}^{n} \gamma_{ij} $; then $C=\{ \delta_1 x_{1},\delta_2 x_{2},...,\delta_n x_{n}\}$ stabilizes $B$.

\end{proof}

We note that the most restrictive assumption that we make in this paper is that $A$ contains an $S$-stable basis.
Therefore, in light of Proposition \ref{RF=A FINITE}, all of the results presented in this paper apply to any
finite dimensional $F$-algebra.

In the following theorem we prove the existence of an $S$-nice subalgebra of $A$.

\begin{thm} \label{Prop existence of S-nice} Let $B$ be an $S$-stable basis of $A$ over $F$. Let
$M=\sum_{b \in B} Sb$ and $R=\{ x \in A \mid xM \subseteq M\}$.
Then $R$ is an $S$-nice subalgebra of $A$.

\end{thm}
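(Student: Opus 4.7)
The plan is to verify the two defining properties of an $S$-nice subalgebra, namely $R \cap F = S$ and $RF = A$. The first of these is immediate: Proposition \ref{lying over $S$} already establishes $R \cap F = S$ for exactly this $R$, and since that proposition requires only that $B$ be a basis (not that it be $S$-stable), it applies verbatim here. So the whole content of the theorem is the equality $RF = A$.

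By Lemma \ref{RF=A iff R contains a basis}, to show $RF = A$ it suffices to exhibit a basis of $A$ over $F$ contained in $R$. The natural candidate is the basis $C$ that stabilizes $B$, which exists by hypothesis since $B$ is $S$-stable. Unwinding Definition \ref{stable}, for every $c \in C$ and every $b \in B$ we have $cb \in \sum_{y \in B} Sy = M$. Since $M$ is the $S$-linear span of $B$ and left multiplication by any element of $A$ is $S$-linear, this extends immediately to $cM \subseteq M$: any $m \in M$ can be written as $\sum_i s_i b_i$ with $s_i \in S$ and $b_i \in B$, whence $cm = \sum_i s_i (cb_i) \in M$. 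Therefore every $c \in C$ lies in $R$, i.e., $C \subseteq R$.

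Having $C \subseteq R$ and $C$ a basis of $A$ over $F$, Lemma \ref{RF=A iff R contains a basis} gives $RF = A$, completing the proof. There is no real obstacle here; the notion of $S$-stability was tailored precisely so that the stabilizing basis $C$ would automatically land inside the idealizer-type ring $R$, and the proof amounts to making that observation explicit and combining it with Proposition \ref{lying over $S$}.
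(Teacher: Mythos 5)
Your proof is correct and follows exactly the paper's argument: invoke Proposition \ref{lying over $S$} for $R\cap F=S$, observe that the stabilizing basis $C$ lies in $R$ (you just spell out the $S$-linearity step that the paper leaves implicit), and conclude $RF=A$ via Lemma \ref{RF=A iff R contains a basis}.
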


\begin{proof} 
Let $C$ be a basis that stabilizes $B$. Then $R$ contains $C$ and thus by Lemma
\ref{RF=A iff R contains a basis}, $R  F=A$.
By Proposition \ref{lying over $S$}, $R$ is lying over $S$. So, $R$ is an $S$-nice
subalgebra of $A$.

\end{proof}

In a less detailed form, Theorem \ref{Prop existence of S-nice} can be restated as follows:

\begin{thm} \label{existence of S-nice} If there exists an $S$-stable basis of $A$ over $F$, then there exists an $S$-nice
subalgebra of $A$.

\end{thm}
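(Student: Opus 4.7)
The plan is to derive this as an immediate corollary of the preceding Theorem \ref{Prop existence of S-nice}, which already furnishes an explicit construction. Since the hypothesis now only asserts the \emph{existence} of an $S$-stable basis, the strategy is simply to pick one such basis, feed it into the construction given above, and invoke the stronger theorem.

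More concretely, I would begin by fixing an $S$-stable basis $B$ of $A$ over $F$, which is guaranteed to exist by hypothesis. Next I would form the $S$-submodule $M = \sum_{b \in B} Sb$ of $A$ and define $R = \{x \in A \mid xM \subseteq M\}$, exactly as in the statement of Theorem \ref{Prop existence of S-nice}. The conclusion of that theorem is precisely that this $R$ is an $S$-nice subalgebra of $A$; therefore an $S$-nice subalgebra of $A$ exists, completing the proof.

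There is essentially no obstacle here: the content has already been done in Theorem \ref{Prop existence of S-nice}, and this statement is a purely existential repackaging of it. The only thing to be careful about is that the construction of $R$ depends on the choice of basis $B$, but since the assumption supplies at least one such basis, any choice works and the existence claim follows.
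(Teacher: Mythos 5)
Your proposal is correct and matches the paper exactly: the paper itself presents this theorem as a ``less detailed form'' of Theorem \ref{Prop existence of S-nice}, so the intended proof is precisely to pick an $S$-stable basis $B$, form $M=\sum_{b\in B}Sb$ and $R=\{x\in A\mid xM\subseteq M\}$, and invoke that theorem.
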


Note that in view of Theorem \ref{existence of S-nice} and Proposition \ref{RF=A FINITE}, if $A$ is finite dimensional over $F$
then an $S$-nice subalgebra of $A$ always exists.

The following is a generalization of Theorem \ref{existence of S-nice}.

\begin{thm} \label{I ideal implies exist R nice containing I} Let $I$ be a proper ideal
of $A$. Assume that there exists a basis of $I$ over $F$ that is contained in some $S$-stable basis of $A$ over $F$.
Then there exists an $S$-nice subalgebra $R$ of $A$ such that
$I \triangleleft R$.
\end{thm}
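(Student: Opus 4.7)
The plan is to mimic the construction of Theorem \ref{Prop existence of S-nice}, enlarging the defining $S$-module so as to absorb the ideal $I$. Let $B$ be an $S$-stable basis of $A$ containing the given basis $B_I$ of $I$ over $F$, let $C$ be a basis that stabilizes $B$, and put
\[
M \;=\; I + \sum_{b \in B} Sb, \qquad R \;=\; \{\, x \in A \mid xM \subseteq M\,\}.
\]
Then $M$ is an $S$-submodule of $A$ and, by the usual argument (cf.\ Proposition \ref{lying over $S$}), $R$ is an $S$-subalgebra of $A$.

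For $I \triangleleft R$ it suffices to show $I \subseteq R$, since closure under $R$-multiplication then comes for free from $I$ being a two-sided ideal of $A$. Indeed, for $x \in I$ we have $xM \subseteq xA \subseteq I \subseteq M$, so $x \in R$. To obtain $RF = A$ via Lemma \ref{RF=A iff R contains a basis}, I would verify $C \subseteq R$: for $c \in C$ and $m = i + \sum_{b} s_b b \in M$, the stabilizing property of $C$ gives $cb \in \sum_{y \in B} Sy \subseteq M$ for each $b \in B$, while $ci \in I \subseteq M$; hence $cm \in M$.

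The step requiring care, and the main obstacle, is $R \cap F = S$: enlarging $M$ by $I$ could \emph{a priori} allow non-integral elements of $F$ into $R$. Observing that
\[
M \;=\; \sum_{b \in B_I} Fb \;+\; \sum_{b \in B \setminus B_I} Sb,
\]
uniqueness of $F$-coordinates with respect to the basis $B$ shows that an element of $A$ lies in $M$ if and only if its $B$-coordinates indexed by $B \setminus B_I$ all lie in $S$. Since $I$ is a proper ideal, $B_I \subsetneq B$, so one may pick $b_0 \in B \setminus B_I$. For $\alpha \in R \cap F$ we have $b_0 \in M$, hence $\alpha b_0 \in M$; but the $B$-expansion of $\alpha b_0$ is simply $\alpha \cdot b_0$, whose coordinate at $b_0 \in B \setminus B_I$ equals $\alpha$, forcing $\alpha \in S$. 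It is precisely this last argument that makes essential use of the hypothesis $I \neq A$.
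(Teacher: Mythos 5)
Your proof is correct and takes essentially the same route as the paper's: your module $M = I + \sum_{b \in B} Sb$ coincides with the paper's $N = I + \sum_{b \in B \setminus B_1} Sb$ (since $\sum_{b \in B_1} Sb \subseteq I$), and your arguments for $R \cap F = S$ and $I \subseteq R$ match the paper's. The only cosmetic difference is in establishing $RF = A$: you verify $C \subseteq R$ directly and invoke Lemma \ref{RF=A iff R contains a basis}, whereas the paper shows $R' \subseteq R$ for the stabilizer $R'$ of $\sum_{b \in B} Sb$ and quotes Theorem \ref{Prop existence of S-nice}.
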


\begin{proof} Let $B_1$ be a basis of $I$ over $F$ and let $B_1 \subset B$ be an $S$-stable basis of $A$ over $F$.
Let $N=I+\sum_{b \in B \setminus B_1} Sb$; $N$ is clearly an $S$-submodule of $A$ and thus $R=\{ x \in A \mid
xN \subseteq N\}$ is an $S$-subalgebra of $A$. We prove that $R \cap F=S$. It is clear that $S
\subseteq R$. On the other hand, let  $\alpha \in R \cap F$ and take $b \in B \setminus B_1$. Then,
$\alpha b \in N$; since $b \notin I$ and $B$ is linearly independent, $\alpha \in S$.
We prove now $R  F=A$. Let $ M=\sum_{b \in B} Sb $ (note
that $M \subseteq N$) and let $R'=\{ x \in A \mid xM \subseteq
M\}$. We prove $R' \subseteq R$. Indeed, Let $x \in R'$, then $$xN = x(I+\sum_{b \in B \setminus B_1}Sb)
= xI + x \sum_{b \in B \setminus B_1} Sb \subseteq I+M =N.$$
Now, by Theorem \ref{Prop existence of S-nice} (or Lemma \ref{RF=A iff R contains a basis}),
$R'F=A$. Therefore, $R  F=A$.
Finally, let $x \in I$; then $xN \subseteq xA \subseteq I \subseteq N$.
Therefore $x \in R$ and the theorem is proved.

\end{proof}

In view of Theorem \ref{I ideal implies exist R nice containing I}, we have the following two remarks.

\begin{rem} Let $I$ be a proper ideal
of $A$. Assume that $A$ contains an $S$-stable basis.
If $I$ has a finite basis over $F$ then, by Proposition \ref{if exists basis then exists with linearly independent set} and Theorem
\ref{I ideal implies exist R nice containing I}, there exists an $S$-nice subalgebra $R$ of $A$ such that
$I \triangleleft R$.

\end{rem}

\begin{rem}

Let $I$ be a proper ideal
of $A$. If $[A:F]< \infty$ then there exists an $S$-nice subalgebra $R$ of $A$ such that
$I \triangleleft R$, by Proposition \ref{RF=A FINITE} and Theorem \ref{I ideal implies exist R nice containing I}.

\end{rem}

Let $E$ be a field with valuation $v$ and corresponding valuation domain $O_{v}$.
It is well known (see, for example, [En, p. 62, Corollary 9.7]) 
that for any field $K \supseteq F$, there exists at least one and often many different valuation domains of $K$ lying over $O_v$.
Let us mention now some other well-known existence theorems regarding extensions of valuation domains.
Three main classes of rings were suggested throughout the years as
the noncommutative version of a valuation ring. These three types
are invariant valuation rings, total valuation rings, and Dubrovin
valuation rings. They are interconnected by the following diagram:
$$\{\text{invariant valuation rings}\}\!\subset\!\{\text {total valuation rings} \}
\!\subset\! \{\text {Dubrovin valuation rings}\}.$$
We shall now define these classes of rings and present their existence theorems.

An invariant valuation ring $V$ of a division ring $D$ is a subring $V$ of $D$ such that for every $a \in D^*$
we have $a \in V$ or $a^{-1} \in V$, and also $aVa^{-1} \subseteq V$.
An invariant valuation ring corresponds to a valuation on $D$, in the usual sense.

Let $D$ be a division ring finite dimensional over its center, $E$. An invariant valuation ring
lying over $O_v$ exists if and only if $v$ extends uniquely to each field $L$ with $E \subseteq L \subseteq D$; in particular, there exists at most
one (but perhaps none) invariant valuation ring lying over $O_v$. See [Wa, Theorem 2.1].

A subring $V$ of a division ring $D$ is called a total valuation ring of $D$
if for every $d \in D^*$, we have $d \in V$ or $d^{-1} \in V$.

Then, there exists a total valuation ring lying over $O_v$ iff the
set $T = \{ d \in D | \ d$ is integral over $O_v \}$ is a ring. When this occurs, there are only
finitely many different total valuation rings lying over $O_v$. See [Wa, Theorem 9.2].
Mathiak has shown (see [Mat, p. 5]) that
they have a valuation-like function whose image is a totally ordered set which is
not a group.

Let $C$ be a simple Artinian ring, a subring $B$ of $C$ is called a Dubrovin
valuation ring of $C$ if $B$ has an ideal $J$ such that $B/J$ is a simple Artinian ring,
and for each $c \in C \setminus B$ there are $b, b' \in B$, such that $cb \in B \setminus J$ and
$b'c \in B \setminus J$.

Let $C$ be a central simple algebra over $E$; i.e., $C$ is a simple $E$-algebra finite dimensional over its center $E$.
Then there exists a Dubrovin valuation ring $B$ of $C$ lying over $O_{v}$. Furthermore,
if $B'$ is another Dubrovin valuation ring of $C$ lying over $O_{v}$,
then there is $c \in C^*$ with $B = cBc^{-1}$. See [Wa, Theorem 10.3].
Morandi (cf. [Mor]) defines a value function which is a quasi-valuation satisfying a few more conditions. Given an integral
Dubrovin valuation ring $B$ of a central simple algebra $C$, Morandi shows that there is a value function
$w$ on $C$ with $B$ as its value ring (the value ring of $w$ is defined as the set of all $x \inC$ such that
$w(x) \geq 0$). Morandi also proves the converse, that if $w$ is a value function on $C$, then the value ring
is an integral Dubrovin valuation ring.

Now, we apply the above results to prove the existence of quasi-valuations on $A$ extending $v$ on $F$.

\begin{thm} \label{there exists qv} Let $v$ be a valuation on $F$ and let $O_v$ be the valuation domain corresponding to $v$.
Let $A$ be an $F$-algebra. If there exists an $O_{v}$-stable basis of $A$ over $F$, then there exists a \qv $w$ on $A$
extending $v$ on $F$.
\end{thm}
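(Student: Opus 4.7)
The plan is to combine two earlier results almost mechanically. The hypothesis gives an $O_v$-stable basis of $A$ over $F$, so I can specialize Theorem \ref{existence of S-nice} by taking $S = O_v$ (which is an integral domain and not a field, since $v$ is a valuation on $F$). That produces an $O_v$-nice subalgebra $R$ of $A$, meaning $R \cap F = O_v$ and $RF = A$.

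The lying-over condition $R \cap F = O_v$ is precisely the hypothesis required to invoke Theorem \ref{existence of qv from Sa}. Feeding $R$ into that theorem yields a quasi-valuation $W$ on $RF$ with value monoid $\mathcal M(\Gamma_v) \cup \{\infty\}$ such that $O_W = R$ and $W$ extends $v$ on $F$. Since $RF = A$, this $W$ is defined on all of $A$, so setting $w = W$ completes the proof.

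I do not expect any genuine obstacle: the work of constructing $R$ has already been carried out in Theorem \ref{Prop existence of S-nice}, and the construction of the filter quasi-valuation from $R$ is exactly the content of Theorem \ref{existence of qv from Sa}. The only thing worth noting for clarity is that the hypothesis ``$O_v$-stable basis'' is used solely to invoke Theorem \ref{existence of S-nice}; no additional properties of $v$ beyond its valuation domain enter the argument at this stage. The proof is therefore a two-line citation chain, and this theorem is best read as the quasi-valuation-theoretic payoff of the existence theorems for $S$-nice subalgebras established earlier in the section.
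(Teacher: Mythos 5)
Your proposal is correct and is essentially identical to the paper's own proof: specialize Theorem \ref{existence of S-nice} with $S=O_v$ to get an $O_v$-nice subalgebra $R$, then feed $R$ into Theorem \ref{existence of qv from Sa} and use $RF=A$. No further comment is needed (beyond the tacit standing assumption that $v$ is nontrivial, so that $O_v$ is not a field).
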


\begin{proof} By Theorem \ref{existence of S-nice} there exists an $O_{v}$-nice subalgebra $R$ of $A$.
By Theorem \ref{existence of qv from Sa}, there exists a \qv $w$ on $A$ extending $v$ on $F$ whose corresponding quasi-valuation ring is
$R$.

\end{proof}

\begin{cor} Let $v$ be a valuation on $F$ and let $O_v$ be the valuation domain corresponding to $v$.
Let $A$ be an $F$-algebra and let $I$ be proper ideal of $A$. If there exists a basis of $I$ over $F$ that is contained in some $O_{v}$-stable basis of $A$ over $F$, then there exists a \qv $w$ on $A$
extending $v$ on $F$ such that $w(I) \geq 0$.
\end{cor}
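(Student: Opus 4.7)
The plan is to chain together the two existence theorems already proved: Theorem \ref{I ideal implies exist R nice containing I}, which produces an $O_v$-nice subalgebra containing $I$ as an ideal, and Theorem \ref{existence of qv from Sa}, which manufactures a quasi-valuation from such a subalgebra. The hypothesis about the basis of $I$ is precisely what is needed to feed into the former, and the conclusion ``$I$ is an ideal of $R$'' is precisely what forces $I \subseteq O_w$.

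First I would invoke Theorem \ref{I ideal implies exist R nice containing I} with $S = O_v$: by assumption there exists an $O_v$-stable basis $B$ of $A$ over $F$ that contains a basis of $I$ over $F$, so the hypothesis of that theorem is met. This yields an $O_v$-nice subalgebra $R$ of $A$ such that $I \triangleleft R$. In particular, $R \cap F = O_v$ and $RF = A$.

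Next I would apply Theorem \ref{existence of qv from Sa} to $R$ viewed as a subring of $A$ with $R \cap F = O_v$. This produces a quasi-valuation $w$ on $RF = A$, with value monoid $\mathcal{M}(\Gamma_v) \cup \{\infty\}$, extending $v$ on $F$ and satisfying $O_w = R$.

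Finally, since $I$ is an ideal of $R$, every element of $I$ belongs to $R = O_w$, i.e., has nonnegative $w$-value; hence $w(I) \geq 0$, as required. There is no genuine obstacle here: the work was already done in establishing Theorem \ref{I ideal implies exist R nice containing I} (where the ideal-preserving refinement of the nice-subalgebra construction was carried out) and Theorem \ref{existence of qv from Sa} (which identifies $R$ with the quasi-valuation ring of $w$). The corollary is simply their composition.
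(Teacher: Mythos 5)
Your proposal is correct and follows exactly the paper's own proof: apply Theorem \ref{I ideal implies exist R nice containing I} with $S=O_v$ to get an $O_v$-nice subalgebra $R$ with $I\triangleleft R$, then apply Theorem \ref{existence of qv from Sa} to obtain $w$ with $O_w=R$, so $I\subseteq O_w$ gives $w(I)\geq 0$. Nothing is missing.
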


\begin{proof} By Theorem \ref{I ideal implies exist R nice containing I} there exists an $O_v$-nice subalgebra $R$ of $A$
such that $I \lhd R$. The result now follows from Theorem \ref{existence of qv from Sa}.



\end{proof}

Let $I$ be a proper ideal of $A$.
In view of the remarks after Theorem \ref{I ideal implies exist R nice containing I}, if either $A$ is finite dimensional over $F$
or $A$ contains an $O_{v}$-stable basis and $I$ is finite dimensional over $F$, then the conclusion of the previous corollary is valid.

\end{proof}

Of course, not every $S$-nice subalgebra of $A$ satisfies the property of the previous theorem. Indeed,
consider any $S$-nice subalgebra of $A$ that does not satisfy LO over $S$.


Let $v$ be a valuation on $F$ with corresponding valuation domain $O_v$, and let $w_1$ and $w_2$ be two quasi-valuations on $A$ extending $v$ on $F$.
We assume that $w_1$ and $w_2$ are comparable; namely, there exists a totally ordered set $M$
such that $w_1(A) \cup w_2(A) \subseteq M \cup \{ \infty\}$.
We write $w_1 \leq' w_2$ if for every $x \in A$, $w_1(x) \leq w_2 (x)$ in $M \cup \{ \infty\}$.

It is clear that if $w_1$ and $w_2$
are both filter quasi-valuations then they are comparable. Thus, the set of all filter
quasi-valuations on $A$ extending $v$ on $F$ is partially ordered by $\leq'$.

Let $R_1 \subseteq R_2$ be $O_v$-nice subalgebras of $A$. Let $w_i$ ($i=1,2)$ be the filter \qv on $A$ induced by $(R_i,v)$.
From the construction of the filter \qv one
can check that $w_1 \leq' w_2$. Now, Let $\mathcal U = \{ R_i \}_{i \in I}$ be a nonempty chain of $S$-nice subalgebras of $A$ and let
$T=\bigcap_{i \in I} R_i$. Let $w_i$ ($i \in I)$ be the filter \qv on $A$ induced by $(R_i,v)$.
Let $w$ denote the filter \qv induced by $(T,v)$.
It is not difficult to see that for every $x \in T  F $ we have $w(x) = \bigcap_{i \in I} w_i(x)  $.
Moreover, if $\mathcal U$ is a maximal chain of $S$-nice subalgebras of $A$ then, as shown above, $T  F $ is a proper
$T$-subalgebra of $A$.
In this case, for all $x \in A$, we have $\bigcap_{i \in I} w_i(x) = \emptyset$ iff $x \in A \setminus T  F$.

By Proposition \ref{not minimal}, Theorem \ref{there exists qv}, and the discussion above we have,

\begin{cor} Let $F$ be a field with valuation $v$ and let $O_v$ be the corresponding valuation domain.
Let $A \neq F$ be an $F$-algebra having an $O_v$-stable basis over $F$.
Then there exists an infinite decreasing chain of quasi-valuations
on $A$ extending $v$ on $F$. Moreover, for any $O_{v} \neq R
\subseteq A$, an $O_v$-subalgebra of $A$ lying over $O_{v}$ there exists an
infinite decreasing chain of quasi-valuations on $R  F$ extending
$v$ on $F$ starting from $w_{R}$; where $w_{R}$ denotes the filter
\qv induced by $(R,v)$.
\end{cor}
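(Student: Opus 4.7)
The plan is to combine Proposition \ref{not minimal}, Theorem \ref{there exists qv}, and the observation from the paragraph immediately preceding the corollary, namely that for $O_v$-nice subalgebras $R_1 \subseteq R_2$ the induced filter quasi-valuations satisfy $w_{R_1} \leq' w_{R_2}$; this inequality is strict whenever $R_1 \neq R_2$, because the quasi-valuation ring of a filter quasi-valuation recovers the subalgebra (Theorem \ref{existence of qv from Sa}).

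For the main assertion, I would invoke Theorem \ref{existence of S-nice} (which applies since $A$ carries an $O_v$-stable basis) to produce an $O_v$-nice subalgebra $R_0$ of $A$. Proposition \ref{not minimal} then yields an infinite strictly descending chain $R_0 \supsetneq R_1 \supsetneq R_2 \supsetneq \cdots$ of $O_v$-nice subalgebras of $A$. Letting $w_i$ be the filter quasi-valuation induced by $(R_i, v)$, each $w_i$ is a quasi-valuation on $A$ extending $v$ with $O_{w_i} = R_i$, and the chain of rings translates by the cited observation into a strict chain $w_0 >' w_1 >' w_2 >' \cdots$ of quasi-valuations on $A$, as required.

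For the moreover clause, I would run the same scheme inside $RF$: the given subalgebra $R$ is tautologically an $O_v$-nice subalgebra of $RF$ (since $R \cap F = O_v$ and $R \cdot F = RF$), and $w_R$ is by definition the filter quasi-valuation attached to it. Feeding $R$ into Proposition \ref{not minimal} applied to the $F$-algebra $RF$ would produce a strictly descending chain $R \supsetneq R_1 \supsetneq R_2 \supsetneq \cdots$ of $O_v$-nice subalgebras of $RF$, whose filter quasi-valuations form the desired descending chain on $RF$ starting at $w_R$. The main obstacle here is that Proposition \ref{not minimal} requires an $O_v$-stable basis of the ambient $F$-algebra: for the main assertion this is hypothesized outright, but for the moreover clause one must exhibit such a basis for $RF$. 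Since $R$ contains a basis of $RF$ over $F$ by Lemma \ref{RF=A iff R contains a basis}, the task reduces to modifying or rescaling this basis (using the $O_v$-stable basis of $A$ as the ambient source, together with Lemma \ref{if exists basis then exists with linearly independent set}) so as to meet Definition \ref{stable}; handling this is the key step that must be carried out to transport the argument from $A$ to $RF$.
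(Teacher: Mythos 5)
Your treatment of the main assertion matches the paper's (one-line) justification: Theorem \ref{existence of S-nice} produces an $O_v$-nice subalgebra of $A$, Proposition \ref{not minimal} produces a strictly descending chain of such subalgebras, and the discussion preceding the corollary, together with the fact that $O_{w_i}=R_i$ forces distinct rings to give distinct filter quasi-valuations, converts this into a strictly decreasing chain of quasi-valuations on $A$. That part is correct.

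The gap is in the moreover clause, and you have located it correctly but not closed it. Applying Proposition \ref{not minimal} with ambient algebra $RF$ requires an $O_v$-stable basis of $RF$ over $F$, and nothing supplies one: Lemma \ref{if exists basis then exists with linearly independent set} only turns a stable basis of $A$ into another stable basis of $A$, and a basis of $RF$ chosen inside $R$ need not be $O_v$-stable (Definition \ref{stable} asks for a second basis $C$ of $RF$ with $cb\in\sum_{y}O_vy$, and the fact that products of elements of $R$ stay in the ring $R$ says nothing about staying in the $O_v$-span of a chosen basis). The way out is not to transport the stable basis into $RF$ but to keep working inside $A$ and intersect with $R$. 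Since $R\cap F=O_v$ and $R\neq O_v$, pick $y\in R\setminus O_v$; then $y\notin F$, so $\{1,s_0y\}$ is linearly independent for a non-invertible $0\neq s_0\in O_v$, and the construction in the proof of Proposition \ref{not minimal} gives an $O_v$-nice subalgebra $R_1$ of $A$ with $R_1\subseteq M$ and $y\notin M$, hence $y\notin R_1$. Put $P_1=R\cap R_1$. Then $P_1\cap F=O_v$, $P_1\subsetneq R$, and $P_1F=RF$: for any $x\in R\subseteq A=R_1F$ there is $0\neq s\in O_v$ with $sx\in R_1$, so $sx\in P_1$ and $x\in P_1F$. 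Thus $P_1$ is an $O_v$-nice subalgebra of the $F$-algebra $RF$; since $RF\neq F$ we get $P_1\neq O_v$, so the step iterates to give $R\supset P_1\supset P_2\supset\cdots$, and the discussion preceding the corollary, applied to these $O_v$-nice subalgebras of $RF$, yields the strictly decreasing chain of filter quasi-valuations starting from $w_R$.
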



In Theorem \ref{not minimal} we
showed that there is no minimal (with respect to inclusion)
$S$-nice subalgebra of $A$; in particular there exists an
infinite descending chain of $S$-nice subalgebras of $A$.
As noted above, by Zorn's Lemma there exists a maximal $S$-nice
subalgebra of $A$. In case $S$ is a valuation domain of $F$ and $A$ is a field then the maximal $S$-nice
subalgebras of $A$ are precisely the valuation domains (whose valuations
extend $v$) of $A$. We shall now show that even in the case of a central
simple $F$-algebra, one can have an infinite ascending chain of
$S$-nice subalgebras of $A$ (even when $S$ is a valuation domain).

\begin{ex} Let $C$ be a non-Noetherian integral domain with field of fractions
$F$. Let $\{ 0 \} \neq I_{1} \subset I_{2} \subset I_{3} \subset
...$ be an infinite ascending chain of ideals of $C$ and
let $A=M_{n}(F)$. Then

\begin{equation*}
 \left(
\begin{array}{cccccccc}
C& C & ... & C & I_{1}  \\
C & C & ... & C & I_{1}  \\
. & . & ... & . & .  \\
. & . & ... & . & .  \\
. & . & ... & . & .  \\
C & C & ... & C & I_{1}  \\
C & C & ... & C & C  \\
\end{array} \right) \subset  \left(
\begin{array}{cccccccc}
C& C & ... & C & I_{2}  \\
C & C & ... & C & I_{2}  \\
. & . & ... & . & .  \\
. & . & ... & . & .  \\
. & . & ... & . & .  \\
C & C & ... & C & I_{2}  \\
C & C & ... & C & C  \\
\end{array} \right)
...
\end{equation*}

 is an infinite accending chain of $C$-nice subalgebras
of $A$.

\end{ex}

\end{document}